\documentclass[conference]{IEEEtran}

\usepackage[utf8]{inputenc} 
\usepackage[T1]{fontenc}
\usepackage{url}
\usepackage{ifthen}
\usepackage{cite}
\usepackage[cmex10]{amsmath} 
\usepackage{algorithm}
\usepackage{algpseudocode}

\usepackage{graphicx} 
\usepackage{amssymb}
\usepackage{amsthm}
\usepackage{bbm,bm}
\usepackage{xcolor}
\newtheorem{definition}{Definition}

\newtheorem{theorem}{Theorem}
\newtheorem{lemma}{Lemma}
\newtheorem{remark}{Remark}

\usepackage{comment}

\def\BibTeX{{\rm B\kern-.05em{\sc i\kern-.025em b}\kern-.08em
    T\kern-.1667em\lower.7ex\hbox{E}\kern-.125emX}}

\newcommand{\Exp}{\mathbf{E}}
\newcommand{\Pro}{\mathbf{P}}

\newcommand{\1}{\mathbbm{1}}
\newcommand{\ind}[1]{\1_{\{#1\}}}

\newcommand{\Lap}{\mathrm{Lap}}

\newcommand{\algname}{DP-SUM-CUSUM}
\newcommand{\DeltaT}{\Delta_{\rm max}}

\newcommand{\lixing}[1]{\color{red}{#1 }\color{black}}
\newcommand{\zhang}[1]{\color{brown}{#1 }\color{black}}

\begin{document}
\title{Sequential Change Detection for Multiple
Data \\ Streams  with Differential Privacy} 



 \author{%
   \IEEEauthorblockN{Lixing Zhang\IEEEauthorrefmark{1},
                     Liyan Xie\IEEEauthorrefmark{1},
                        Ruizhi Zhang\IEEEauthorrefmark{2}
                     }
   \IEEEauthorblockA{\IEEEauthorrefmark{1}%
                     Department of Industrial and Systems Engineering, University of Minnesota,
                     \{zhan9503, liyanxie\}@umn.edu }
   \IEEEauthorblockA{\IEEEauthorrefmark{2}%
                     Department of Statistics, University of Georgia, ruizhi.zhang@uga.edu
                    }
   }

\maketitle


\begin{abstract}
Sequential change-point detection seeks to rapidly identify distributional changes in streaming data while controlling false alarms. Existing multi-stream detection methods typically rely on non-private access to raw observations or intermediate statistics, limiting their usage in privacy-sensitive settings. 
    We study sequential change-point detection for multiple data streams under differential privacy constraints. We consider multiple independent streams undergoing a synchronized change at an unknown time and in an unknown subset of streams, and propose \algname, a differentially private detection procedure based on the summation of per-stream CUSUM statistics with calibrated Laplace noise injection. We show that \algname{} satisfies sequential $\varepsilon$-differential privacy and derive bounds on the average run length to false alarm and the worst-case average detection delay, explicitly characterizing the privacy--efficiency tradeoff. A truncation-based extension is also presented to handle distributional shifts with unbounded log-likelihood ratios. Simulations and experiments on an Internet of Things (IoT) botnet dataset validate the proposed approach.
\end{abstract}


\section{Introduction}

Sequential change-point detection aims to rapidly detect distributional changes in streaming data while controlling the false alarm rate, and is a fundamental problem in statistics, signal processing, and information theory \cite{poor-hadj-QCD-book-2008,Siegmund1985,tartakovsky2014sequential,Lai:2001,tutorial_jsait}. It plays a pivotal role in a wide range of real-world tasks, such as health monitoring \cite{balageas2010structural}, misinformation and fake-news detection in social networks \cite{lazer2018science,li2017detecting}, and threat detection \cite{polunchenko2012nearly}.

This work focuses on sequential change detection for multi-stream data. We assume that the distribution changes synchronously in an unknown subset of data streams, and aim to rapidly detect the unknown change-point. Existing multi-stream change-point detection methods typically assume full observability of raw data or channel-level statistics and compute detection statistics directly from these quantities \cite{xie:2013,Wang:2015,liu2019scalable,Spectral-CUSUM2023,chen2022high}. This assumption is increasingly incompatible with privacy requirements in domains such as user monitoring, healthcare, financial transactions, and network event logging \cite{cai2021cost}. 
In these settings, streaming observations often contain sensitive user-level information, and releasing intermediate statistics may leak private personal data. 

In this work, we introduce a privacy-preserving framework for change-point detection in multi-stream settings under differential privacy constraints \cite{dwork2006calibrating}. Our proposed private detection procedures are designed by aggregating evidence across streams while injecting calibrated random noise to ensure sequential $\varepsilon$-differential privacy. Our proposed methods are based on multi-stream CUSUM-type statistics and are computationally efficient for online implementation. We provide a rigorous privacy analysis based on sensitivity bounds for the multi-stream detection statistics. We also derive explicit theoretical guarantees that characterize the tradeoff between privacy and detection performance, quantified by the average run length (ARL) to false alarm and the worst-case average detection delay (WADD). We further extend our framework to settings with unbounded log-likelihood ratios via a truncation strategy. Finally, we demonstrate the effectiveness of the proposed methods through both simulation studies and experiments on a real-world IoT botnet dataset.

The remainder of the paper is organized as follows. 
Section~\ref{sec:problem-setup} introduces the multi-stream change detection model and the notion of differential privacy. Section~\ref{sec:proposed-method} presents \algname, a differentially private multi-stream detection procedure, establishes its privacy guarantees, and provides theoretical analyses of false-alarm control and detection delay. 
Section~\ref{sec:numerical-results} reports numerical results from simulations and real data experiments. Section~\ref{sec:conclusion} concludes the paper.

\subsection{Related Work}

Prior work on sequential multi-stream change detection has studied detection across independent streams \cite{xie:2013,Mei2010,Wang:2015,zou2015efficient,fellouris2016second,chan2017optimal,zhang:2018,liu2019scalable,enikeeva2019high,cao2019sketching,zhang2022robust,cao2023adaptive} and high-dimensional correlated data streams \cite{jiao2018subspace,zou2009multivariate,yan2018real,keshavarz2020sequential,qiu2020big,hewapathirana2020change,xie2020subspace,Spectral-CUSUM2023,sha2022quickest,chen2022high,chen2022monitoring,gosmann2022sequential}. However, all of these works compute the detection statistics directly from raw observations and pass them to the decision process without privacy safeguards. 
Recent work has developed private sequential detection methods under differential privacy, spanning single-stream change estimation \cite{cummings2018differentially,cummings2020privately}, graphical models \cite{DP-CBM}, local differential privacy \cite{berrett2021locally,zhang2026localdp,yadav2026locally}, and distributed or multi-stream settings \cite{kurt2022online}.

A recent work \cite{dp-cusum-2025} is among the first to study sequential change detection under an $\varepsilon$-differential privacy constraint for single-stream data, and characterizes the impact of the privacy parameter $\varepsilon$ on key performance metrics such as the average run length and the worst-case average detection delay. In this work, we extend the core analysis of \cite{dp-cusum-2025} to the multi-stream setting, addressing the challenges arising from aggregating local statistics across streams in a privacy-preserving manner.


\section{Problem Setup and Preliminaries}
\label{sec:problem-setup}

We consider $K$ independent data streams $\{X^k_t\}_{t\ge1}$, $k=1,\dots,K$. Initially, the $X^{k}_t$ are distributed according to the density $f_{0,k}$ for $k = 1,\ldots, K.$ At some unknown time $\tau$, an unusual event occurs and affects an unknown subset of data streams in
the sense that if the $k^{th}$ data stream is affected, the density function of its local observations $X^k_t$
changes from $f_{0,k}$ to $f_{1,k}$ after time $\tau.$  Here, $f_{0,k}$ and $f_{1,k}$ denote the pre-change and post-change distributions for stream $k$ and are assumed to be known. However, we assume the set of affected data streams and the number of affected data streams $1\le m\le K$ is {\it unknown}.




To define differential privacy in the multi-stream setting, we first specify
a notion of neighboring data streams.

\begin{definition}[Neighboring data streams] 
\label{def:neighbor}
Two data streams $\mathbf X_{(1:n)}:=\{X^k_t\}_{k\in [K], t\in [n]}$ and $\tilde{\mathbf X}_{(1:n)}:=\{\tilde{X}^k_{t}\}_{k\in [K], t\in [n]}$ are called neighboring streams if they only differ at a single time step, say $t_0$, and a single data stream $k_0$:
\[
X^{k_0}_{t_0} \neq \tilde{X}^{k_0}_{t_0}; \, X^{k}_{t_0} = \tilde{X}^{k}_{t_0} , \forall k\neq k_0;\, X^k_{t} = \tilde{X}^k_{t},\forall k, \forall t \neq t_0.
\] 
\end{definition}
Based on the neighboring relation above, we now define differential privacy
for sequential detection rules by requiring privacy guarantees at every
possible stopping time.

\begin{definition}[$\varepsilon$-DP sequential detection on multi-streams]
    \label{def:edp}
    A randomized sequential detection procedure with stopping time $\mathcal{T}$ is said to be $\varepsilon$-differentially private ($\varepsilon$-DP) if for any pair of neighboring data streams $\mathbf X_{(1:n)}$ and $\tilde{\mathbf X}_{(1:n)}$ in the sense
    of Definition~\ref{def:neighbor}
    and for any $t\geq 1$, 
    \begin{equation}\label{eq:dp}       \Pro_\mathcal{T}\big(\mathcal{T}=n |
    \mathbf X_{(1:n)}\big)
\le
    e^{\varepsilon}  \Pro_\mathcal{T}\big(\mathcal{T}=n | \tilde{\mathbf X}_{(1:n)}\big).
    \end{equation}
    Here, $\Pro_\mathcal{T}$ denotes the probability measure induced by the randomness of the stopping time $\mathcal T$.
\end{definition}
Definition~\ref{def:edp} means that altering any single observation only slightly affects the distribution
of the randomized stopping time $\mathcal T$ so that one cannot easily infer individual data values from the detection output.
Here, a randomized sequential detection rule is a stopping time $\mathcal T,$ where the decision $\{\mathcal T=n\}$ is only based on observations up to time $n,$ i.e., $\{X^k_t\}_{k\in [K], t\in[n]}$ and the additional random noises added to the procedure.
We use $\Pro_\infty,\Exp_\infty$ to denote joint probability and expectation of the data under the no-change regime and the added random noise, and $\Pro^{(k_1,k_2,\cdots, k_m)}_\tau,\Exp^{(k_1,k_2,\cdots, k_m)}_\tau$ when the change-point occurs at $\tau$ and the density of the observation $X_t^k$ changes from $f_{0,k}$ to $f_{1,k}$ only at the $k^{th}$ data stream for $k=k_1,k_2,\cdots,k_m$ and there are no changes at other data streams. As a special case, $\Pro^{(k_1,k_2,\cdots, k_m)}_0,\Exp^{(k_1,k_2,\cdots, k_m)}_0$ corresponds to a change occurring at time $\tau=0$.

We consider two common performance metrics for a randomized stopping time $\mathcal T$: (i) {\it Average run length (ARL)} to false alarm measures the expected time to a false alarm when no change is present: $\mathrm{ARL}(\mathcal T)=\Exp_\infty[\mathcal T]$; (ii) {\it Worst-case average detection delay} (WADD) over all possible change-points and pre-change data \cite{Lorden1971}:
\begin{align*}
&\mathrm{WADD}^{(k_1,k_2,\cdots, k_m)}(\mathcal T)\\
    =&\sup_{\tau\ge 0}\text{esssup } \Exp_\tau^{(k_1,k_2,\cdots, k_m)}\!\big[(\mathcal T-\tau)^+\big|
    \{X^k_t\}_{k\in[K],t\in [\tau]}\big].
    \end{align*}
Here, $\text{esssup}$ denotes the essential supremum of the conditional expected delay with respect to the pre-change histories.
We aim to develop an $\varepsilon$-DP detection procedure that satisfies pre-specified ARL constraints while achieving small detection delay, and to quantify the trade-off between detection delay and the privacy budget $\varepsilon$.

As a non-private benchmark, the classical CUSUM procedure is an optimal method for change-point detection without privacy guarantees and serves as a building block for our proposed $\varepsilon$-DP procedures.
For stream $k$, we define the log likelihood ratio (LLR) 
\begin{equation}\label{eq:llr}
\ell_k(x)=\log\frac{f_{1,k}(x)}{f_{0,k}(x)},
\end{equation}
and Kullback-Leibler information $I_{0,k}:=\Exp_0[\ell_k(X_1^k)]=\int f_{1,k}(x) \ell_k(x) dx $.
The CUSUM statistic for stream $k$ is defined as \cite{page-biometrica-1954} 
\begin{equation}\label{eq:cusum}
\begin{aligned}
S_{t}^k=\max\{0,S_{t-1}^k+\ell_k(X_t^k)\}, t\ge 1, \text{ with } S_{0}^k=0,
\end{aligned}
\end{equation}
and the associated CUSUM stopping time is the first time $S_{t}^k$ exceeds a pre-set threshold.  

\section{Proposed Method: \algname}
\label{sec:proposed-method}

In this section, we present our proposed $\varepsilon$-DP detection procedure for multi-stream data, which we call \algname{}. For simplicity, we first focus on the setting where the log-likelihood ratio $\ell_k$ as defined in Eq.~\eqref{eq:llr} is bounded for all data streams. This case allows us to introduce the main ideas underlying our
privacy-preserving procedure and to derive
clean performance guarantees. Moreover, we extend our method to handle unbounded log-likelihood ratios via a truncation strategy, which allows the privacy and ARL/WADD arguments to extend with minor modifications.

Our proposed method builds upon the classical CUSUM procedure and is constructed as follows. We first define the per-stream sensitivity 
\[
\Delta_k := \sup_{x,y}|\ell_k(x)-\ell_k(y)|, \, k =1,\ldots,K,
\]
and the global sensitivity $\DeltaT:=\max_{1\le k\le K}\Delta_k$.  
For each stream $k$, we maintain a classical CUSUM statistic $S_t^k$ as in Eq.~\eqref{eq:cusum} and then combine the per-stream CUSUM statistics through summation, similar to \cite{Mei2010}: 
\begin{equation}\label{eq:sum-cusum}
U_t = \sum_{k=1}^K S_{t}^k.    
\end{equation}

To enforce differential privacy, we add independent noise to both the detection statistic and the threshold. Specifically, let $Z_t$ and $W$ be independent zero-mean Laplace random variables with distribution $\Lap(2\Delta_{\rm max}/\varepsilon)$. The stopping time of our proposed procedure is
\begin{equation}\label{eq:dp-cusum}
\mathcal T(b)=\inf\{t\geq 1: U_t+Z_t\ge b+W\}. 
\end{equation}
Here, the Laplace noise $Z_t$ protects the privacy of each individual data, while $W$ prevents information leakage through repeated or
adaptive comparisons over time. The full procedure is summarized in Algorithm~\ref{alg:dp-sum-cusum}.

\begin{algorithm}[b]
\caption{\algname}
\label{alg:dp-sum-cusum}
\begin{algorithmic}[1]
\Statex \textbf{Input:} Threshold $b>0$, privacy parameter $\varepsilon>0$, global sensitivity $\DeltaT$.
\Statex \textbf{Output:} Stopping time $\mathcal T(b)$.
\State \textbf{Initialize:} $t=0$, $Z_0=0$, $U_0=0$, $S_{0}^k=0, \forall k$.
\State Sample $W \sim \Lap(2\Delta_{\rm max}/\varepsilon)$.
\While{$U_t + Z_t \ge b + W$}
    \State $t \leftarrow t+1$.
    \For{$k=1,\dots,K$}
        \State $S_{t}^k \leftarrow \max\{0, S_{t-1}^k + \ell_k(X_t^k)\}.$ 
    \EndFor
    \State Aggregate statistics: $U_t \leftarrow \sum_{k=1}^K S_{t}^k.$
    \State Sample $Z_t \sim \Lap(2\Delta_{\rm max}/\varepsilon)$.
\EndWhile
\State Output stopping time $\mathcal T(b)=t$ and declare a change has occured before time $t$.
\end{algorithmic}
\end{algorithm}

We first prove the proposed \algname\ procedure satisfies the $\varepsilon$-DP requirement as defined in Definition~\ref{def:edp}. 
%






\begin{theorem}[Sequential $\varepsilon$-DP]
\label{thm:T1-EDP}
The procedure $\mathcal T(b)$ in Eq.~\eqref{eq:dp-cusum} is sequentially $\varepsilon$-DP: for all $n\ge1$ and neighboring data streams $\mathbf X_{(1:n)}$, $\tilde{\mathbf X}_{(1:n)}$ (in the sense of Definition~\ref{def:neighbor}),
\[
\Pro_{\mathcal T}\big(\mathcal T=n\mid \mathbf X_{(1:n)}\big)\le e^{\varepsilon}\Pro_{\mathcal T}\big(\mathcal T=n\mid \tilde{\mathbf X}_{(1:n)}\big).
\]
\end{theorem}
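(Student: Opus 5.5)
This is the AboveThreshold (sparse vector) argument, and the plan is to reduce the claim to a one-step sensitivity bound on the aggregated statistic. Fix neighboring streams differing only at $(t_0,k_0)$, and write $U_t$, $\tilde U_t$ for the sum-CUSUM statistics in Eq.~\eqref{eq:sum-cusum} computed from $\mathbf X$ and $\tilde{\mathbf X}$.

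\textbf{Step 1 (sensitivity).} We show $|U_t-\tilde U_t|\le \DeltaT$ for every $t\le n$. For $k\neq k_0$ we have $S_t^k=\tilde S_t^k$ for all $t$. For $t<t_0$ the statistics coincide. At $t_0$, the map $s\mapsto\max\{0,s+a\}$ is $1$-Lipschitz in $a$, so $|S_{t_0}^{k_0}-\tilde S_{t_0}^{k_0}|\le |\ell_{k_0}(X)-\ell_{k_0}(\tilde X)|\le\Delta_{k_0}$. For $t>t_0$ the increments agree, and the same map is $1$-Lipschitz in $s$, so the gap does not grow. Hence $|U_t-\tilde U_t|\le\Delta_{k_0}\le\DeltaT$.

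\textbf{Step 2 (conditioning on $W$).} Write $\Pro(\mathcal T=n\mid\mathbf X)=\int p_W(w)\,\Pro(\mathcal T=n\mid \mathbf X,W=w)\,dw$. Given $W=w$, the $Z_t$ are independent, so
\[
\Pro(\mathcal T=n\mid\mathbf X,w)=\Bigl[\prod_{t<n}F_t(w)\Bigr]\,G_n(w),
\]
where $F_t(w)=\Pro(Z_t<b+w-U_t)$ and $G_n(w)=\Pro(Z_n\ge b+w-U_n)$. Analogous quantities $\tilde F_t$, $\tilde G_n$ are defined for $\tilde{\mathbf X}$.

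\textbf{Step 3 (shift and density ratios).} Substitute $w=w'+\DeltaT$. By Step 1, $U_t\le\tilde U_t+\DeltaT$, so $b+w'+\DeltaT-U_t\ge b+w'-\tilde U_t$. This gives $F_t(w'+\DeltaT)\le\tilde F_t(w')$ for $t<n$; these are the non-stopping events, and the shift costs nothing there. For the stopping step, $b+w'+\DeltaT-U_n\ge b+w'-\tilde U_n-\DeltaT+\DeltaT$. Since $|U_n-\tilde U_n|\le\DeltaT$, we get $b+w'+\DeltaT-U_n\ge b+w'-\tilde U_n-2\DeltaT$? More precisely, the threshold for $Z_n$ differs from $b+w'-\tilde U_n$ by at most $2\DeltaT$. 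Because the Laplace$(2\DeltaT/\varepsilon)$ tail satisfies $\Pro(Z\ge a-c)\le e^{c\varepsilon/(2\DeltaT)}\Pro(Z\ge a)$, we obtain $G_n(w'+\DeltaT)\le e^{\varepsilon}\tilde G_n(w')$. The density shift gives $p_W(w'+\DeltaT)\le e^{\varepsilon/2}p_W(w')$. Multiplying these would give $e^{3\varepsilon/2}$, which is too large.

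\textbf{Main obstacle (budget split).} To reach exactly $e^{\varepsilon}$, the accounting has to be done carefully, as in the standard AboveThreshold proof. Shift $w$ by exactly $\DeltaT$, costing $e^{\varepsilon/2}$ through the $W$ density. Then bound the final step more tightly: the argument of $G_n$ changes by $\DeltaT-(U_n-\tilde U_n)\ge -\DeltaT$ (wait: $\in[0,2\DeltaT]$, favorable direction when $U_n$ is larger). The worst case is a change of $\DeltaT$, costing $e^{\varepsilon/2}$ through the $Z_n$ tail. Together these give $e^{\varepsilon}$.

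I expect verifying the signs in this last step to be the delicate point. The fallback is to instead bound $\Pro(Z_n\ge b+w-U_n)$ by conditioning on the event's value, using the standard AboveThreshold proof (Dwork--Roth, Thm.~3.23) with query sensitivity $\DeltaT$ and noise scales $2\DeltaT/\varepsilon$ (threshold) and $4\DeltaT/\varepsilon$. If the paper's choice of $2\DeltaT/\varepsilon$ for $Z_t$ is insufficient, I would note that the single-stopping-time structure lets us take the $Z_n$ cost as $\varepsilon/2$ and so close the bound. Finally, integrate over $w'$ to conclude the theorem.
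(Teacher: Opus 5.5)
Your Step 1 is correct and matches the sensitivity bound the paper uses, but Step 3 and the budget accounting do not close.

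First, the shift has the wrong sign. With $w=w'+\DeltaT$ you get $b+w'+\DeltaT-U_t\ge b+w'-\tilde U_t$, hence $F_t(w'+\DeltaT)\ge \tilde F_t(w')$. That is the opposite of what the non-stopping factors need; under this shift it is the stopping factor that becomes free. The shift that places the non-stopping events of $\mathbf X$ inside those of $\tilde{\mathbf X}$ is $w=w'-\DeltaT$. But then the threshold for $Z_n$ drops by $\DeltaT+(U_n-\tilde U_n)\in[0,2\DeltaT]$, which with $Z_n\sim\Lap(2\DeltaT/\varepsilon)$ costs up to $e^{\varepsilon}$. Together with $e^{\varepsilon/2}$ from $p_W$, this gives only $e^{3\varepsilon/2}$.

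Your claim that ``the worst case is a change of $\DeltaT$'' is precisely the missing step, and it cannot follow from $|U_t-\tilde U_t|\le\DeltaT$ alone. Take an abstract statistic with $U_t-\tilde U_t=-\DeltaT$ for $t<n$ and $U_n-\tilde U_n=+\DeltaT$. For this mechanism, the ratio $\Pro(\mathcal T=n\mid\mathbf X)/\Pro(\mathcal T=n\mid\tilde{\mathbf X})$ tends to $e^{3\varepsilon/2}$ as $n\to\infty$. Neither fallback repairs this. Using $\Lap(4\DeltaT/\varepsilon)$ noise on $Z_t$ proves privacy of a different procedure than the one in the theorem. And AboveThreshold already stops at a single time, so the single-stopping structure does not reduce the $Z_n$ cost.

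The missing idea is that the CUSUM recursion preserves order. The map $(s,a)\mapsto\max\{0,s+a\}$ is nondecreasing in both arguments, so changing $X^{k_0}_{t_0}$ moves $S^{k_0}_t$ in the same direction for every $t\ge t_0$. The other streams are unchanged, so $U_t-\tilde U_t$ has a constant sign for all $t\le n$. Split into two cases.

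\emph{Case $0\le U_t-\tilde U_t\le\DeltaT$ for all $t$.} Do not shift $w$. Then $F_t(w)\le\tilde F_t(w)$ for $t<n$. The $Z_n$ threshold drops by at most $\DeltaT$, so $G_n(w)\le e^{\varepsilon/2}\tilde G_n(w)$.

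\emph{Case $-\DeltaT\le U_t-\tilde U_t\le 0$ for all $t$.} Substitute $w=w'-\DeltaT$, paying $e^{\varepsilon/2}$ through $p_W$. Then $F_t(w'-\DeltaT)\le\tilde F_t(w')$ because $\tilde U_t-U_t\le\DeltaT$. The $Z_n$ threshold drops by $\DeltaT-(\tilde U_n-U_n)\in[0,\DeltaT]$, costing another $e^{\varepsilon/2}$.

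Either way the bound is $e^{\varepsilon}$ with the paper's noise scales. Your Lipschitz argument in Step 1 already contains the needed fact, but you must extract the monotonicity, not just the magnitude bound. Monotonicity is what makes the reduction to the single-stream argument valid here, and that reduction is how the paper's proof proceeds.
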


\begin{proof}[Proof]
First, it has been shown in \cite[Lemma 3]{dp-cusum-2025} that for stream $k$, if $X^k_{(1:n)}$ and $\tilde X^k_{(1:n)}$ differ in at most one time index, then their corresponding CUSUM statistics $S_t^k$ and $\tilde S_t^k$ satisfies $\big|S_{t}^k-\tilde S_{t}^k \big| \le \Delta_k$, $\forall t\le n$. Then we have the corresponding summation of the per-stream CUSUM statistic satisfies
\[
|U_t-\tilde U_t| \le \DeltaT,\, \forall 1\le t \le n.
\]
Following the proof of \cite[Theorem 1]{dp-cusum-2025}, one can show that the stopping time $\mathcal T$ in Eq.~\eqref{eq:dp-cusum} is $\varepsilon$-DP.
\end{proof}


We then analyze the false-alarm and detection-delay performance of \algname{}. The following results provide the lower bound on ARL and the upper bound on WADD.
In particular, the ARL bound shows that false alarms can still be controlled exponentially in the threshold, and WADD scales on the order of $b/I_{\rm tot}$, where $I_{\rm tol}$ denotes the total Kullback–Leibler information across the streams affected by the change. Together, these results characterize the fundamental tradeoff between privacy budget $\varepsilon$ and detection efficiency.
Compared to \cite{dp-cusum-2025}, the main technical difficulty here is that we work with an aggregated multi-stream statistic with an unknown subset of affected data streams, so both the post-change drift and the DP sensitivity need to be controlled at the aggregate level.
    
\begin{theorem}[ARL of \algname]\label{thm:ARL-T1}
For $b>K+1$, we have
\begin{equation}\label{eq:arl-multi1}
\Exp_\infty[\mathcal T(b)]\ge \frac{1}{16} e^{h(\varepsilon,\DeltaT)b - (K+1)}\big(\frac{K+1}{b+K+1}\big)^{K+1},
\end{equation}
where $h(\varepsilon,\DeltaT)=\min\{\varepsilon/(2\Delta_{\rm max}),1\}$.
\end{theorem}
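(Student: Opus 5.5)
Our plan is to bound the per-step false-alarm probability uniformly in $t$ and then convert this into a run-length bound. Under $\Pro_\infty$ each $S_t^k$ is a reflected random walk with increments $\ell_k(X_t^k)$ satisfying $\Exp_\infty[e^{\ell_k(X)}]=1$. Hence $e^{S_t^k}$ is dominated by a nonnegative supermartingale-type quantity, and the standard argument gives $\Pro_\infty(S_t^k\ge x)\le e^{-x}$ for every $t$ and $x\ge0$, so $\Exp_\infty[e^{\theta S_t^k}]\le 1/(1-\theta)$ for $\theta<1$.

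Step 1 treats the sum over streams. Since the streams are independent, $U_t=\sum_k S_t^k$ is stochastically dominated by a sum of $K$ i.i.d.\ $\mathrm{Exp}(1)$ variables, that is, by a $\mathrm{Gamma}(K,1)$ variable. This gives $\Pro_\infty(U_t\ge x)\le \Pro(\Gamma_K\ge x)\le e^{-x}\sum_{j<K}x^j/j!$.

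Step 2 handles the Laplace noise. The scale of the noise is $\lambda=2\DeltaT/\varepsilon$, so $1/\lambda=\varepsilon/(2\DeltaT)$. Write $h=h(\varepsilon,\DeltaT)$. We want $\Pro(U_t+Z_t-W\ge b)\lesssim$ (polynomial factor)$\cdot e^{-hb}$. We first condition on $W$ and deal with $Z_t$ by a Chernoff bound at rate $h$. Because $h\le 1/\lambda$ we must stay strictly below the moment-generating-function pole, so we use $\theta=h(1-\delta)$, or equivalently a direct convolution of the densities. For the term $W$ we avoid a union over time: we keep $W$ fixed and bound the run length conditionally. On the event $\{W\ge 0\}$, which has probability $1/2$, the effective threshold is at least $b$. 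The Gamma/Laplace convolution tail is then $\Pro(\Gamma_K+Z\ge b)\le C\,e^{-hb}\,(\ldots)$. We will optimize the Chernoff parameter $\theta=h-(K+1)/(b+K+1)$ applied to $\Gamma_K+Z$, which carries $K+1$ effective exponential factors, $K$ from $\Gamma_K$ and one from the Laplace term. This choice produces the factor $e^{-hb+(K+1)}\big((b+K+1)/(K+1)\big)^{K+1}$, and the hypothesis $b>K+1$ ensures $\theta>0$. Concretely, $\Exp[e^{\theta(\Gamma_K+Z)}]\le (1-\theta)^{-K}(1-\theta^2\lambda^2)^{-1}\le (1-\theta/h)^{-(K+1)}\cdot 2$-ish, since $h\le\min(1,1/\lambda)$.

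Step 3 converts the per-step bound into the ARL bound. Let $p$ denote the per-step bound. A union bound gives $\Pro_\infty(\mathcal T\le N\mid W\ge0)\le Np$. Choosing $N=1/(2p)$ yields $\Pro_\infty(\mathcal T>N)\ge \tfrac12\cdot\tfrac12$, and hence $\Exp_\infty[\mathcal T]\ge N/4$. With the constant $2$ from the moment-generating-function bound, this gives $\tfrac1{16}e^{hb-(K+1)}\big((K+1)/(b+K+1)\big)^{K+1}$.

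We expect the main obstacle to be Step 2: making the tail of the dependent sum $U_t$ rigorous at each fixed $t$. The $S_t^k$ across $k$ are independent, but the exponential tail of each reflected walk has to hold uniformly in $t$. We plan to derive this via the time-reversal identity $S_t^k\overset{d}{=}\max_{j\le t}\sum_{i\le j}\ell_k(X_i^k)$ together with Ville's maximal inequality. Beyond that, the remaining work is to track the constants so that exactly $1/16$ and the exponent $K+1$ emerge.
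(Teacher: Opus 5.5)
Your argument is correct, and its core matches the paper's proof. You use a per-time Chernoff bound with $\Exp_\infty[e^{\theta S_t^k}]\le 1/(1-\theta)$ together with the Laplace moment generating function. The paper imports this bound from \cite[Corollary 3]{dp-cusum-2025}; you rederive it via time reversal plus Ville's inequality. You then take a union bound over $t\le N$ and choose $N=1/(2p)$, which is exactly the paper's maximization of $x(1-xA)$.

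You differ in how you treat $W$ and the two privacy regimes. The paper conditions on each value $W=w$, optimizes $\lambda$ as a function of $w$, and integrates the resulting bound against the Laplace density, separately for $\varepsilon\le 2\DeltaT$ and $\varepsilon>2\DeltaT$. You keep only the event $\{W\ge 0\}$, on which the threshold is at least $b$. You then use a single $\theta$ for both regimes via $h\le\min\{1,\varepsilon/(2\DeltaT)\}$.

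Your route is shorter and avoids the case split. The ``2-ish'' factor is also unnecessary: $(1-\theta^2\lambda^2)^{-1}\le(1-\theta\lambda)^{-1}\le(1-\theta/h)^{-1}$, so you actually obtain the constant $1/8$. What the paper's integration over $w$ buys is a better polynomial factor when $\varepsilon\le 2\DeltaT$. There $e^{hw}f_W(w)$ is constant for $w>0$, so integrating $(b+w)^{-(K+1)}$ gains one power. This gives $\frac18 e^{hb-(K+1)}(\frac{K+1}{b+K+1})^{K}$, which is stronger than the theorem but not needed for it.

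One small correction is needed. The condition $b>K+1$ guarantees $\theta=h-\frac{K+1}{b+K+1}>0$ only when $h\ge 1/2$, i.e.\ $\varepsilon\ge\DeltaT$. In the high-privacy regime it can fail for $b$ just above $K+1$. The fix is one line. If $\theta\le 0$, then $h(b+K+1)\le K+1$, so $hb\le K+1$ and the right-hand side of the theorem is below $1/16$. Since $\mathcal T(b)\ge 1$ deterministically, the bound then holds trivially.
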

    
\begin{proof}[Proof]
We first condition on $W=w$ and similar to the proof of \cite[Theorem 2]{dp-cusum-2025}, we have $\forall x>0$, $\lambda\in (0, h(\varepsilon,\DeltaT))$,
\begin{equation}\label{eq:arl-proof1}
\begin{aligned}
& \Exp_\infty[\mathcal T(b) | W=w] \ge x \Pro_\infty(\mathcal T(b)\ge x | W=w) \\
& \ge x(1-\sum_{n=1}^{\lfloor x \rfloor} e^{-\lambda (b+w)}\Exp_\infty[e^{\lambda Z_n}] \prod_{k=1}^K \Exp_\infty[e^{\lambda S_{n}^k}] )  \\ 
& \ge x\bigg(1-x e^{-\lambda (b+w)} \frac{1}{1-4\DeltaT^2\lambda^2/\varepsilon^2} (\frac{1}{1-\lambda})^K\bigg),
\end{aligned}    
\end{equation}
where the last inequality is due to $\Exp_\infty[e^{\lambda S_{n}^k}]\le 1/(1-\lambda)$ (by \cite[Corollary 3]{dp-cusum-2025}) and $\Exp_\infty[e^{\lambda Z_n}]=1/(1-4\DeltaT^2\lambda^2/\varepsilon^2)$. By choosing $x$ that maximizes the right-hand-side of \eqref{eq:arl-proof1}, we have 
\begin{equation}\label{eq:arl-proof2}
\Exp_\infty[\mathcal T(b) | W=w]\geq \frac{e^{\lambda(b+w)}}{4}(1-\lambda)^K(1-\frac{2\DeltaT}{\varepsilon}\lambda).    
\end{equation}
We then consider the following two cases. If $\varepsilon\le2\DeltaT$, the right-hand-side (RHS) in \eqref{eq:arl-proof2} is lower bounded by $\frac{e^{\lambda(b+w)}}{4}(1-\frac{2\Delta_{\rm max}\lambda}{\varepsilon})^{K+1}$, which is maximized at $\lambda^*=\frac{\varepsilon}{2\Delta_{\rm max}}-\frac{K+1}{b+w}$ if $w>\frac{2(K+1)\Delta_{\rm max}}{\varepsilon}$, thus we have
\begin{align*}
        &\Exp_\infty[\mathcal T(b)]=\int_{-\infty}^{\infty}\Exp_\infty[\mathcal T(b) | W=w]f_W(w)dw\\
        &\ge \int_{\frac{2(K+1)\Delta_{\rm max}}{\varepsilon}}^{\infty} \!\!\! \frac{e^{\frac{\varepsilon(b+w)}{2\Delta_{\rm max}}-(K+1)}}{4}\bigg[\!\frac{2\Delta_{\rm max}(K+1)}{(b+w)\varepsilon}\!\bigg]^{K+1}\!\!f_W(w)dw\\
        &\geq \frac18 e^{\frac{\varepsilon}{2\Delta_{\rm max}}b-(K+1)}(\frac{K+1}{b+K+1})^K.
    \end{align*}
If $\varepsilon>2\DeltaT$, the RHS of \eqref{eq:arl-proof2} is lower bounded by $e^{\lambda(b+w)}(1-\lambda)^{K+1}$, which is maximized at $\lambda^*=1-\frac{K+1}{b+w}$. Then we have 
\begin{align*}
        &\Exp_\infty[\mathcal T(b)]=\int_{-\infty}^{\infty}\Exp_\infty[\mathcal T(b)| W=w]f_W(w)dw\\
        &\ge \int_{0}^{\infty} \frac{e^{b+w-(K+1)}}{4}(\frac{K+1}{b+w})^{K+1}\frac{\varepsilon}{4\Delta_{\rm max}}e^{-\frac{\varepsilon w}{2\DeltaT}}dw\\
        &\ge \frac14\frac{\varepsilon e^{b-(K+1)}}{4\Delta_{\rm max}}(\frac{K+1}{b+1})^{K+1}\int_{0}^{1}e^{-\frac{\varepsilon}{2\Delta_{\rm max}}w}dw\\
        &
        \ge \frac{e^{b-(K+1)}}{16}(\frac{K+1}{b+K+1})^{K+1}.
    \end{align*}
Combining the two cases together completes the proof.   
\end{proof}

\begin{theorem}[WADD of \algname]\label{thm:wadd-T1}
For any $b>0$, we have
\begin{equation}\label{eq:T1-wadd}
\mathrm{WADD}^{(k_1,k_2,\cdots, k_m)}(\mathcal T(b)) 
\le 
\frac{b}{I_\mathrm{tot}} + 
\frac{4\DeltaT}{\varepsilon I_\mathrm{tot}^{3/2}}\sqrt{b}+C,
\end{equation}
where $I_{\mathrm{tot}}:=\sum_{i=1}^m I_{0,k_i}$ is the total information number of those affected data streams and $C$ is a constant depending on $(\varepsilon, \Delta_{\rm max}, I_\mathrm{tot})$ but not on $b$.
\end{theorem}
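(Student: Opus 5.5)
We bound the delay by comparing $U_t$ with a random walk whose drift is $I_{\rm tot}$, and then control the two noise terms. Since each $S_t^k\ge0$, we have $U_t\ge \sum_{i=1}^m S_t^{k_i}$. Unaffected streams can only help, so it suffices to treat the affected ones. The CUSUM recursion also gives $S_t^{k}\ge S_\tau^k+\sum_{s=\tau+1}^{t}\ell_k(X_s^k)\ge \sum_{s=\tau+1}^{t}\ell_k(X_s^k)$ for every $t>\tau$. Hence, for any pre-change history,
\[
U_t\ge R_{t-\tau}:=\sum_{s=\tau+1}^{t}\sum_{i=1}^m \ell_{k_i}(X_s^{k_i}),\qquad t>\tau.
\]
Under $\Pro^{(k_1,\dots,k_m)}_\tau$ and conditionally on $\{X^k_t\}_{t\le\tau}$, the walk $R_j$ has i.i.d. increments with mean $I_{\rm tot}$, and each increment lies in an interval of length at most $m\DeltaT$. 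The history therefore drops out, and the esssup and the supremum over $\tau$ reduce to the case $\tau=0$ with $U$ replaced by $R$. So it is enough to bound $\Exp[\mathcal T']$, where $\mathcal T'=\inf\{n\ge1: R_n+Z_{n}\ge b+W\}$ and $W$ and the $Z_n$ are independent of $R$.

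Next we condition on $W=w$ and fix a target time $n_0=\lceil (b+w)^+/I_{\rm tot}+\gamma\sqrt{(b+w)^+}\,\rceil$, with $\gamma$ to be chosen. For $n\ge n_0$ we write
\[
\Pro(\mathcal T'>n)\le \Pro(R_n+Z_n<b+w)\le \Pro\bigl(R_n-nI_{\rm tot}<-\tfrac12(nI_{\rm tot}-b-w)\bigr)+\Pro\bigl(Z_n<-\tfrac12(nI_{\rm tot}-b-w)\bigr).
\]
Hoeffding bounds the first term by $\exp\{-(nI_{\rm tot}-b-w)^2/(2nm^2\DeltaT^2)\}$. The Laplace tail bounds the second by $\tfrac12\exp\{-\varepsilon(nI_{\rm tot}-b-w)/(4\DeltaT)\}$. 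We then sum over $n\ge n_0$, using $\Exp[\mathcal T'\mid w]\le n_0+\sum_{n\ge n_0}\Pro(\mathcal T'>n)$. The Laplace part sums to a constant of order $\DeltaT/(\varepsilon I_{\rm tot})$. The Gaussian-type part is handled by substituting $u=nI_{\rm tot}-(b+w)$: for $n\ge n_0$ we have $u\ge \gamma I_{\rm tot}\sqrt{b+w}$ and $n\lesssim (b+w+u)/I_{\rm tot}$. Its sum is then $O(\sqrt{b+w})$ with a constant that shrinks as $\gamma$ grows, or is $O(1)$ once $\gamma$ is chosen large relative to $m\DeltaT/I_{\rm tot}$.

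Finally we integrate over $W\sim\Lap(2\DeltaT/\varepsilon)$. We use $\Exp[(b+W)^+]\le b+\Exp|W|=b+2\DeltaT/\varepsilon$ and, by Jensen, $\Exp\sqrt{(b+W)^+}\le\sqrt b+\sqrt{2\DeltaT/\varepsilon}$. The leading term then gives $b/I_{\rm tot}+O(1)$. The $\sqrt b$ term comes from $\gamma\sqrt b$, and to match the stated coefficient $4\DeltaT/(\varepsilon I_{\rm tot}^{3/2})$ we tune $\gamma$ accordingly. The cleanest route is probably to follow the proof of \cite[Theorem 3]{dp-cusum-2025} and choose $n_0=(b+w)/I_{\rm tot}+\frac{4\DeltaT}{\varepsilon I_{\rm tot}^{3/2}}\sqrt{b}$. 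With this choice, $\frac12(n_0I_{\rm tot}-b-w)$ is large enough for both the random-walk fluctuation of order $\sqrt{n_0}$ and the Laplace term to contribute only constants. All remaining pieces, namely $2\DeltaT/\varepsilon$, $\sqrt{\DeltaT/\varepsilon}$, the tail sums and the ceiling, are collected into $C$.

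We expect the main obstacle to be this balancing step. We need to show that the residual tail sum beyond $n_0$ stays bounded uniformly in $b$ while keeping the exact constant $4\DeltaT/(\varepsilon I_{\rm tot}^{3/2})$. This will likely require splitting the sum at $n\approx 2(b+w)/I_{\rm tot}$. Below that point we use the Hoeffding bound with $n$ replaced by $2(b+w)/I_{\rm tot}$; above it, the deviation is linear in $n$ and the bound is geometric. The reduction from the multi-stream statistic to the single random walk $R$ is routine given nonnegativity of the CUSUMs, and the bounded increments of $R$ make Hoeffding applicable.
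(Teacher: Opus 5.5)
Your reduction is sound and matches the paper's comparison $U'_t\le U_t$: since $S^k_t\ge0$ and $\max\{0,x\}\ge x$, $(\mathcal T-\tau)^+$ is dominated by the first passage $\mathcal T'$, which does not depend on the pre-change data. The gap is in how you bound $\Exp[\mathcal T'\mid W=w]$, and it is structural, not a matter of tuning constants.

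Set $\tilde b=b+w$ and $n_0=\tilde b/I_{\rm tot}+\gamma\sqrt{\tilde b}$. Any bound of the form $n_0+\sum_{n\ge n_0}(\text{bound on }\Pro(\mathcal T'>n))$ is at least $n_0+\sum_{n\ge n_0}\Pro(\mathcal T'>n)=\Exp[\max(\mathcal T',n_0)]$, whatever concentration inequality you use. Now $\mathcal T'$ obeys a first-passage CLT around $\tilde b/I_{\rm tot}$ on the scale $\sqrt{\tilde b}$; the Laplace noise only shifts it by $O_P(1)$. Hence $\Exp[\max(\mathcal T',n_0)]=\tilde b/I_{\rm tot}+\sqrt{\tilde b}\,(\Exp[\max(G,\gamma)]+o(1))$. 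Here $G$ is centered normal with variance $\sigma^2/I_{\rm tot}^3$, and $\sigma^2$ is the variance of one increment of $R$. For $\gamma\ge0$ this coefficient is at least $\Exp[G^+]=\sigma/(\sqrt{2\pi}\,I_{\rm tot}^{3/2})$, which does not depend on $\varepsilon$. The stated coefficient $4\DeltaT/(\varepsilon I_{\rm tot}^{3/2})$ tends to $0$ as $\varepsilon\to\infty$, and an excess multiple of $\sqrt b$ cannot be absorbed into $C$. So your route cannot prove the theorem once $\varepsilon$ is large relative to $\DeltaT/\sigma$. It yields only $b/I_{\rm tot}+\kappa\sqrt b+C$, which gives the first-order delay but not the inequality as stated.

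In particular, your claim that with $\gamma=4\DeltaT/(\varepsilon I_{\rm tot}^{3/2})$ the walk's fluctuations ``contribute only constants'' is false. Your own Hoeffding exponent at $n_0$ is about $\gamma^2I_{\rm tot}^3/(2m^2\DeltaT^2)=8/(\varepsilon m)^2$, which does not grow with $b$, and it stays of that size for $\Theta(\sqrt b)$ consecutive $n$. Likewise ``$O(1)$ once $\gamma$ is large'' fails for every fixed $\gamma$. Splitting at $2\tilde b/I_{\rm tot}$ does not help, because the offending terms lie in $[n_0,n_0+O(\sqrt b)]$.

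The missing idea is Wald's identity for the stopped walk. It accounts for the random-walk fluctuations exactly, so only the noise remains.
\begin{itemize}
\item For $\nu=\inf\{t\ge1:R_t+Z_t\ge\tilde b\}$ (your $\mathcal T'$ given $W=w$), the paper uses $I_{\rm tot}\Exp[\nu]=\Exp[R_\nu]$.
\item Since $R_{\nu-1}+Z_{\nu-1}<\tilde b$ (with $Z_0=0$), the overshoot is at most $m\DeltaT+Z_\nu-Z_{\nu-1}$. This gives $\Exp[\nu]\le(\tilde b+m\DeltaT+\Exp|Z_{\nu-1}|)/I_{\rm tot}$.
\item The $\sqrt b$ term therefore comes only from the noise at the last time before stopping. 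Split at $m_1=\lfloor2\tilde b/I_{\rm tot}\rfloor+1$. Cauchy--Schwarz gives $\sum_{i\le m_1}\Exp|Z_i\mathbf 1\{\nu=i+1\}|\le2\sqrt2(\DeltaT/\varepsilon)\sqrt{m_1}$. After dividing by $I_{\rm tot}$, this is exactly $\frac{4\DeltaT}{\varepsilon I_{\rm tot}^{3/2}}\sqrt{\tilde b}$ plus a constant.
\item A Chernoff bound on $\Pro_0(R_i+Z_i<\tilde b)$ makes the part with $i>m_1$ bounded. Your averaging over $W$ then finishes the proof.
\end{itemize}

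Alternatively, you can keep your starting inequality if you do not discard the terms with $n<n_0$. Since $Z_n$ is independent of $R_n$, $\sum_{n\ge1}\Pro(R_n+Z_n<\tilde b)=\Exp[V(\tilde b-Z)]$, where $V(x):=\sum_{n\ge1}\Pro(R_n<x)$. Wald's identity for the first passage of $R$ above $x$, plus the strong Markov property for later visits below $x$, gives $V(x)\le(x^++m\DeltaT)/I_{\rm tot}+V(0)$. Hoeffding gives $V(0)<\infty$. This yields $b/I_{\rm tot}+O(1)$, which implies the stated bound.
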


\begin{proof}[Proof]
Following the proof of \cite[Lemma 2]{dp-cusum-2025}, we have the worst-case delay $\mathrm{WADD}^{(k_1,k_2,\cdots, k_m)}(\mathcal T(b)) \le \Exp_0^{(k_1,k_2,\cdots, k_m)}(\mathcal T(b))$, thus we only need to upper bound $\Exp_0^{(k_1,k_2,\cdots, k_m)}(\mathcal T(b))$. Without loss of generality, we assume that only the first $m$ data streams are affected, and thus we omit the $(k_1,k_2,\cdots,k_m)$ in this proof for simplification. To prove Eq.~\eqref{eq:T1-wadd}, we first derive an upper bound on the conditional detection delay given $W=w$, and then compute the expectation of this bound over the distribution of $W$. Specifically, denote $\tilde b = b+w$, we define a new process $U_t':=\sum_{i=1}^t\sum_{k=1}^m  \ell_k(X_i^k)$ and the new stopping time $\nu(\tilde b)=\inf\{t\ge1: U_t'+Z_t\ge \tilde b\}$. Since each $S_{t}^k \ge 0,\forall k,t$, we have $U_t' \le U_t$ in Eq.~\eqref{eq:sum-cusum} and thus $\Exp_0[\mathcal T(b)| W=w]\le \Exp_0[\nu(\tilde b)].$
Then, we just need to upper bound $\Exp_0[\nu(\tilde b)]$. For simplicity, we use $\nu$ to denote the stopping time $\nu(\tilde b)$ in the following and omit the conditioning on $W=w$ in the following proofs.

First, it is easy to show that $\Exp_0[\nu]$ is bounded, similar to Step 1 in the proof of \cite[Theorem 3]{dp-cusum-2025}. Second, by Wald’s equation, as $\Exp_0[\sum_{k=1}^m  \ell_k(X_1^k)]=I_{\mathrm{tot}}$, we can write
\begin{equation} \label{eq:wald_T1}
\Exp_0[\nu]
=\frac{\Exp_0[U'_\nu]}{I_\mathrm{tot}}
=\frac{\tilde b+\Exp_0[U'_{\nu}+Z_{\nu}-\tilde b]+\Exp_0[-Z_{\nu}]}{I_\mathrm{tot}}.
\end{equation}
We define $Z_0=0$. Note that by definition of $\nu$, $U'_{\nu-1}+Z_{\nu-1}<\tilde b$ and $U'_\nu+Z_\nu-\tilde b\ge0$. Then we have $U'_\nu+Z_\nu-\tilde b=U'_{\nu-1}+\sum_{k=1}^{m}\ell_k(X_\nu^k)+Z_{\nu}-\tilde b+Z_{\nu-1}-Z_{\nu-1}\leq m\Delta_{\rm max}+Z_{\nu}-Z_{\nu-1}$.
Substituting into \eqref{eq:wald_T1} yields

\begin{align}
    \label{eq:wald_T2}
\Exp_0[\nu]&\leq \frac{\tilde{b}+m\Delta_{\rm max}+\Exp_0[Z_\nu-Z_{\nu-1}]+ \Exp_0[-Z_{\nu}]}{I_\mathrm{tot}}\\
&= \frac{\tilde{b}+m\Delta_{\rm max}+\Exp_0[-Z_{\nu-1}]}{I_\mathrm{tot}}\\
&\le \frac{\tilde{b}+m\Delta_{\rm max}+\Exp_0[|Z_{\nu-1}|]}{I_\mathrm{tot}}.  
\end{align}
Third, we derive an upper bound for $\Exp_0[|Z_{\nu-1}|]$. Let $m_1=\lfloor\frac{2\tilde{b}}{I_{\rm tot}}\rfloor+1$, we have 
\begin{align*}
&\Exp_0[|Z_{\nu-1}|]=\sum_{i=1}^\infty\Exp_0[|Z_i\ind{\nu=i+1}|]\\
    &\leq  \underbrace{\sum_{i=1}^{m_1}\Exp_0[|Z_i\ind{\nu=i+1}|]}_{\text{Part 1}}+\underbrace{\sum_{i=m_1+1}^{\infty}\Exp_0[|Z_i|\ind{U_i+Z_i<\tilde b}]}_{\text{Part 2}}.
\end{align*}
By Cauchy-Schwarz inequality, we can show $\text{Part 1} \le \sum_{i=1}^{m_1}(E[|Z_i|^2])^\frac{1}{2}(\Pro_0(\nu=i+1))^\frac{1}{2} \le 2\sqrt{2}\frac{\Delta_{\rm max}}{\varepsilon}\sqrt{m_1}$. For Part 2, for any $i\geq m_1+1$ and $\lambda\in(0,\frac{\varepsilon}{2\DeltaT})$,
\begin{align*}
    &\Exp_0[|Z_{i}|\mathbf{1}_{(U_i+Z_i< \tilde{b})}]\leq (\Exp_0[|Z_{i}|^2)^\frac{1}{2}(\Pro_0(U_i+Z_i< \tilde{b}))^\frac{1}{2}\\
    &\leq 2\sqrt{2}\frac{\Delta_{\rm max}}{\varepsilon}\big(\frac{\Exp_0[e^{-\lambda(\sum_{j=1}^i\sum_{k=1}^{m} \ell_k(X^k_j)+Z_i)}]}{e^{-\lambda \tilde{b}}}\big)^{1/2}\\
    &\leq 2\sqrt{2}\frac{\DeltaT}{\varepsilon}\big(\frac{e^{(\frac{\lambda^2m\DeltaT^2}{8}-I_{\rm tot}\lambda)i+\lambda\tilde{b}}}{1-4\DeltaT^2\lambda^2/\varepsilon^2}\big)^{1/2}.
\end{align*}
Taking $\lambda=\min\{\frac{4I_\mathrm{tot}}{m\Delta_{\rm max}^2}, \frac{\varepsilon}{2\sqrt{2}\Delta_{\rm max}}\}$ in the right-hand-side of the above inequality and take the summation from $i=m_1+1$ to infinity, we have $\text{Part 2}\leq \frac{8\Delta_{\rm max}}{\varepsilon(I_{\rm tot}\lambda-\lambda^2m\Delta_{\rm max}^2/8)}$.
Finally, we substitute the upper bound for $\Exp_0[|Z_{\nu-1}|]$ into Eq.~\eqref{eq:wald_T2}, and take the expectation over $W$ to complete the proof.
\end{proof}

By Theorem~\ref{thm:ARL-T1}, the ARL constraint $\Exp_\infty[\mathcal T(b_{\gamma})]\ge \gamma$ can be satisfied in the asymptotic regime  $\gamma\to\infty$ by choosing $b=b_\gamma$ such that $ \frac{1}{16}e^{h(\varepsilon,\Delta_{\rm max})b_\gamma - (K+1)}(\frac{K+1}{b_\gamma+K+1})^{K+1}=\gamma$. 
This yields
\begin{align*}
b_{\gamma} 
    &=\frac{\log\gamma}{h(\varepsilon,\Delta_{\rm max})}\bigl(1+o(1)\bigr).
\end{align*}
Combining this choice with Theorem~\ref{thm:wadd-T1}, we obtain
\[
\mathrm{WADD}(\mathcal T(b_{\gamma})) \le \frac{\log \gamma}{h(\varepsilon,\Delta_{\rm max})I_{\rm tot}}(1+o(1)).
\]

This shows that, as in many differentially private sequential procedures, stronger privacy protection generally comes at the cost of increased detection delay \cite{berrett2021locally,cummings2020privately}. Therefore, the proposed method is most well-suited to privacy-sensitive monitoring applications where formal privacy guarantees are required and a slight loss in detection efficiency is acceptable.

We extend the \algname{} procedure to the case when LLR $\ell_k$ is unbounded for some streams via the truncation strategy described in the following remark. We note that truncation is necessary here to ensure finite sensitivity, and hence differential privacy, when the log-likelihood ratio is unbounded. At the same time, truncation can limit the contribution of extreme observations and thereby reduce the information available for detection, again reflecting the inherent trade-off between privacy protection and detection efficiency. In practice, the truncation level can be chosen to keep the truncated information numbers sufficiently large, so that the detector can still accumulate post-change evidence effectively and maintain meaningful detection power.
\begin{remark}[Unbounded LLR]\label{sec:truncation}
Let $\mathcal U \subset [K]$ be the set of streams with unbounded LLR. For each stream $k\in\mathcal U$, we define its truncated LLR as
\begin{equation}
    \label{eq:truncate}
    \tilde\ell_k(x) := \min\{|\ell_k(x)|,\Delta'/2\} \mathrm{sign}(\ell_k(x)),
\end{equation}
where $\Delta'$ is a fixed constant and $\mathrm{sign}$ denotes the sign function. Since the sensitivity of truncated LLR $\tilde \ell_k$ is $\Delta'$, we can then define the new global sensitivity as $\DeltaT'=\max\{\max_{k\notin\mathcal U} \Delta_k, \Delta'\}<\infty$. Then we can construct the \algname\ procedure similarly by replacing
$\ell_k(\cdot)$ with $\tilde\ell_k(\cdot)$ in Line 6 of Algorithm~\ref{alg:dp-sum-cusum}. Specifically, we define $\tilde S_t^k = \max\{0,\tilde S_{t-1}^k+\tilde \ell_k(X_t^k)\}$ for $k\in\mathcal U$, and let $\tilde U_t=\sum_{k\in\mathcal U} \tilde S_t^k + \sum_{k\notin\mathcal U} S_t^k$.  
Then the resulting stopping time $\tilde {\mathcal T}(b)$ is
\[
\tilde {\mathcal T}(b):= \inf\{t\geq 1: \tilde U_t+ Z_t\ge b+W\},
\]
where $Z_t,W\sim \Lap(2\Delta'_{\rm max}/\varepsilon)$.
Since the global sensitivity $\DeltaT'$ is bounded after such truncation, it follows from the proof of Theorem~\ref{thm:T1-EDP} that $\tilde T$ is still sequentially $\varepsilon$-DP.

In order to ensure effective detection, in practice, the truncation parameter $\Delta'$ is chosen such that $\forall k \in \mathcal U$,
\begin{equation}\label{ass:finite-KL}
\tilde I_{0,k} := \Exp_0[\tilde\ell_k(X_1^k)]>0, \text{ and }
\tilde I_{1,k} := -\Exp_\infty[\tilde\ell_k(X_1^k)]>0.    
\end{equation} 
Such a choice always exists for sufficiently large $\Delta'$, and under this condition, the proofs of Theorem~\ref{thm:ARL-T1} and Theorem~\ref{thm:wadd-T1} still hold with minimum modifications, and we can obtain similar performance guarantees.
\end{remark}



\section{Numerical Results}
\label{sec:numerical-results}

In this section, we evaluate the proposed \algname\ procedure using synthetic simulations and a real-data example, which demonstrates the practical applicability of our approach in privacy-sensitive
multi-stream monitoring tasks.

\subsection{Simulation Results}\label{sec:simulation}

We first simulate a Laplace mean-shift setting with $K=5$ independent data streams. For each stream, the pre-change
distribution is $\Lap(0,1)$ and the post-change distribution
is $\Lap(0.2,1)$. This setting yields bounded log-likelihood ratios, allowing the original \algname\ procedure to be applied without truncation. We compare the performance of the proposed \algname\ procedure with
the non-private classical SUM-CUSUM as a baseline \cite{Mei2010}. For each value of the detection threshold, we simulate the ARL and expected detection
delay with 10,000 independent trials.  
As shown in Fig.~\ref{fig:simulation} (a), for a fixed ARL level, the detection delays
of \algname\ (under $\varepsilon=0.2,0.4$) are slightly higher than that of the non-private SUM-CUSUM, reflecting the cost of enforcing differential privacy.
Nevertheless, the gap remains moderate, and the \algname\ curves
closely track the baseline, especially for larger values of the privacy
budget~$\varepsilon$.

\begin{figure}[t!]
    \centering
    \setlength{\tabcolsep}{0.1pt}
    \begin{tabular}{cc}
\includegraphics[width=0.49\linewidth]{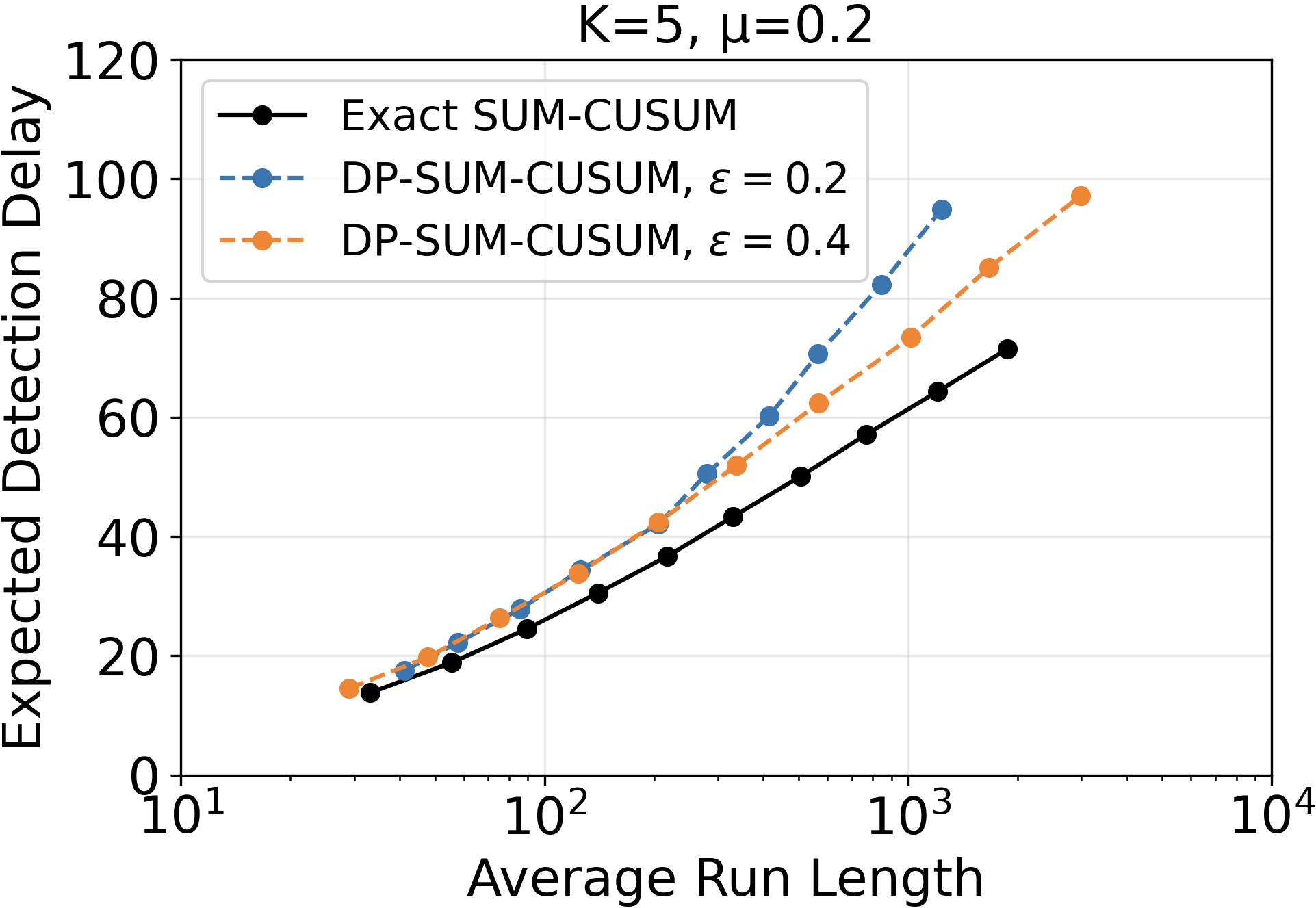} & \includegraphics[width=0.49\linewidth]{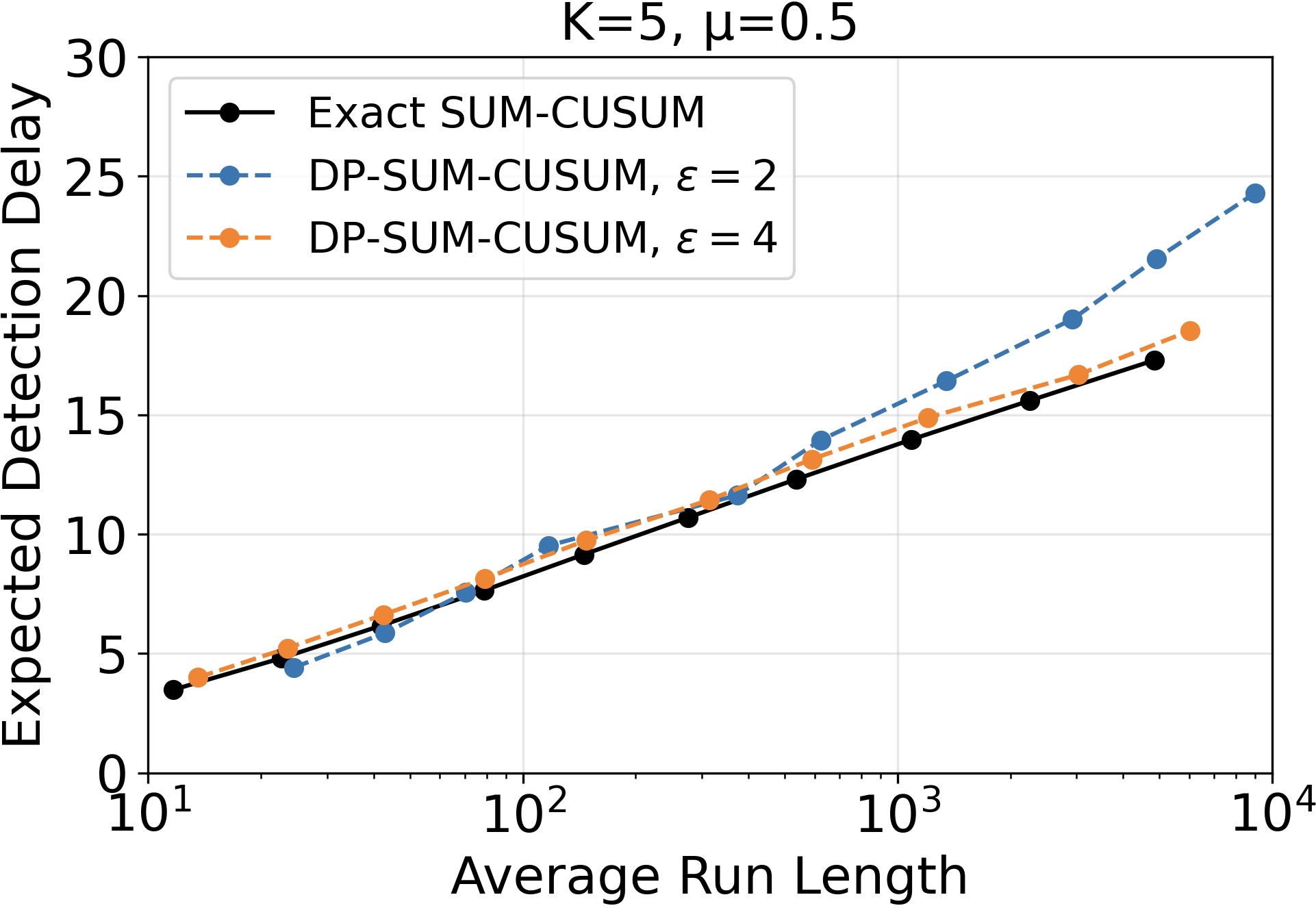}\\
(a) $\Lap(0,1) \xrightarrow{} \Lap(0.2,1)$  & (b) $\mathcal{N}(0,1) \xrightarrow{} \mathcal{N}(0.5,1)$
    \end{tabular}
    \caption{Delay–ARL tradeoff curves comparing \algname\ with the non-private SUM-CUSUM for $K=5$ data streams under (a) Laplace mean-shift $\Lap(0,1)\to\Lap(0.2,1)$ and (b) Gaussian mean-shift $\mathcal{N}(0,1)\to\mathcal{N}(0.5,1)$, where truncation is applied.
    }
    \label{fig:simulation}
\end{figure}

    


We then consider a Gaussian mean-shift setting with $K=5$ streams, where the pre-change distribution is $\mathcal{N}(0,1)$ and the post-change distribution is
$\mathcal{N}(0.5,1)$ for all streams. Since the corresponding log-likelihood ratios are unbounded, we apply the truncation method described in
Remark~\ref{sec:truncation} with truncation parameter $\Delta'=2.5$ that guarantees Eq.~\eqref{ass:finite-KL} is satisfied.
Fig.~\ref{fig:simulation} (b) reports the ARL and detection delay under this Gaussian mean-shift setting averaged over 10,000 independent trials. Despite the use of truncated log-likelihood ratios, the proposed \algname\ procedure maintains a similar ARL–Delay tradeoff structure as in the bounded case. In particular, for
larger values of~$\varepsilon$, the \algname\ curves remain close to the
non-private baseline, demonstrating that truncation does not
substantially degrade detection performance in this regime.

\subsection{Real Data Example}

We evaluate the proposed method on a public Internet of Things (IoT) botnet dataset containing nine heterogeneous consumer devices, including doorbells, thermostats, security cameras, and smart plugs \cite{meidan2018n}. Treating each device as an independent information source yields a multi-stream change-detection problem with $K=9$ streams. 
For each device, the raw observations are time-ordered vectors with $115$ numeric features that summarize the statistics of the flow and packet level. We apply principal component analysis to each stream to reduce the dimensionality to five, and standardize the retained components to have zero mean and unit variance. We treat the benign traffic traces data as pre-change and the data generated under \texttt{junk} attack as post-change. 
 We use historical data collected under benign traffic and previous \texttt{junk} attacks to estimate the pre- and post-change data distributions using Gaussian models. Since the resulting log-likelihood ratios are potentially unbounded, we employ the truncation-based variant of \algname\ described in Remark~\ref{sec:truncation} to ensure finite sensitivity and $\varepsilon$-differential privacy.

\begin{figure}[t!]
        \centering
        \vspace{-0.1in}
      \includegraphics[width=0.9\linewidth]{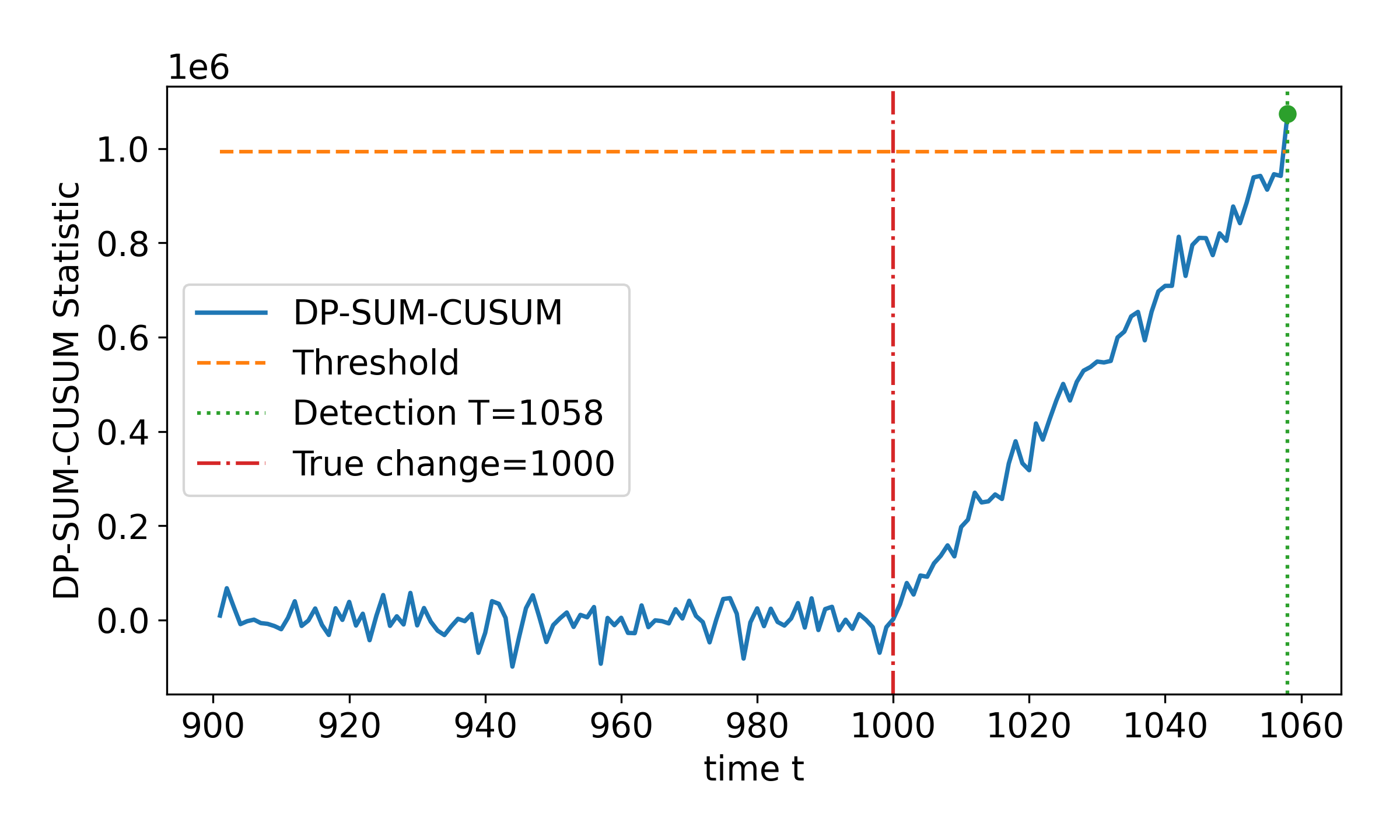}
        \vspace{-0.2in}
        \caption{Trajectory of \algname\ statistics on a real IoT botnet dataset with $K=9$ heterogeneous devices during a \texttt{junk} attack ($\varepsilon=1$). The true change-point (red dash-dotted line) marks the onset of malicious activity.
        }
        \label{fig:real_t1}
\end{figure}

Fig.~\ref{fig:real_t1} illustrates a detection trajectory
for the \texttt{junk} attack under privacy parameter $\varepsilon=1$.
The true change-point corresponds to the onset of attack activity,
after which the aggregated \algname\ statistic exhibits a clear upward trend.
Despite the injected Laplace noise required for privacy preservation,
the statistic crosses the detection threshold shortly after the true
change-point, resulting in a small detection delay. 
These results demonstrate that the proposed privacy-preserving
multi-stream detection procedure remains effective in practice while maintaining differential privacy
guarantees. 

\section{Conclusion}
\label{sec:conclusion}
We studied change-point detection for multi-stream data under differential privacy constraints. We proposed \algname, a privacy-preserving multi-stream detection procedure based on sum-type CUSUM statistics, and derived theoretical guarantees on the average run length and the worst-case average detection delay, characterizing the fundamental tradeoff between privacy and detection efficiency. Future work includes extending the method and analysis to enable identification of the data streams undergoing change, as well as improving robustness via sum-shrinkage schemes, particularly in regimes where only a small and unknown subset of streams changes among a large number of monitored streams.

\bibliographystyle{IEEEtran}
\bibliography{ref}

\clearpage

\onecolumn

\end{document}